\newtheorem{theorem}{Theorem}[section]%
\newtheorem{lemma}[theorem]{Lemma}%
\newtheorem{remark}[theorem]{Remark}%
\newenvironment{ma}{\begin{array}{>{\displaystyle}r >{\displaystyle}c >{\displaystyle}l}}{\end{array}}%
\newcommand{\N}{{\mathbb N}}
\newcommand{\R}{{\mathbb R}}
\newcommand{\lap}{\triangle}
\newcommand{\abs}[1]{{\left \vert #1 \right \vert}}%
\newcommand{\ontop}[2]{{\genfrac{}{}{0pt}{}{#1}{#2}}}
\newcommand{\intl}{\int \limits}
\def\XXint#1#2#3{{\setbox0=\hbox{$#1{#2#3}{\int}$}%
     \vcenter{\hbox{$#2#3$}}\kern-.5\wd0}}%
\numberwithin{equation}{section}%
\title{A Remark on Gauge Transformations and the Moving Frame Method}
\author{Armin Schikorra\footnote{supported by Studienstiftung des Deutschen Volkes Grant}}
\date{}
\begin{document}
\maketitle
\thispagestyle{empty}
\begin{abstract}
\noindent In this note we give a shorter proof of recent regularity results in \cite{Riviere06}, \cite{StruweRiviere}. We differ from the mentioned articles only in using the direct method of H\'{e}lein's moving frame to construct a suitable gauge transformation. Though this is neither new nor surprising, it enables us to describe a proof of regularity using besides the duality of Hardy- and BMO-space only elementary arguments of calculus of variations and algebraic identities. Moreover, we remark that in order to prove Hildebrandt's conjecture one can avoid the Nash-Moser imbedding theorem.\\
There are no new results presented here, nor are there any techniques we could claim originality for.
\\[1ex]
{\bf Keywords:} regularity, systems with skew-symmetric structure, nonlinear decomposition, moving frame\\[1ex]
{\bf AMS Classification:} 35J45, 35B65, 53A10 
\end{abstract}
%

\section{Introduction}
In the influential article \cite{Riviere06} Rivi\`{e}re discovered that Euler equations of conformally invariant variational functionals acting on maps $U \in W^{1,2}(\mathcal{M},\mathcal{N})$ from two-dimensional manifolds $\mathcal{M}$ into $n$-dimensional manifolds $\mathcal{N}$ can locally be written in the form
\begin{equation}
 \label{eq:upde} \lap u^i = \Omega_{ik} \cdot \nabla u^k \quad \mbox{in }B_1(0),\quad 1 \leq i \leq n,
\end{equation}
where $\Omega_{ij} = - \Omega_{ji} \in L^2(B_1(0),\R^2)$ and $u \in W^{1´,2}(B_1(0),\mathcal{N})$ is a local representation of $U$. Here and in the following we adopt Einstein's summation convention, summing over repeated indices. For an overview of the geometric problems and the development towards the regularity result finally achieved, the interested reader is referred to the detailed introduction in \cite{Riviere06}.\\
The right hand side of \eqref{eq:upde} is only in $L^1$, and hence there is no standard theory in order to conclude better regularity as e.g. continuity of $u$. Using an algebraic feature, namely the antisymmetry of $\Omega$, one can construct a gauge transformation $P \in W^{1,2}(B_1(0),SO(n))$ which pointwise almost everywhere is an orthogonal matrix in $\R^{n \times n}$ such that
\begin{equation}\label{eq:divomegap1st}
\operatorname{div} (P^T_{ik} \nabla P_{kj} - P^T_{ik} \Omega_{kl} P_{lj}) = 0 \quad \mbox{in }B_1(0), \quad 1 \leq i,j \leq n.
\end{equation}
Statements on matrices like the last one will often be abbreviated by omitting matrix indices. That is, instead of \eqref{eq:divomegap1st} we will write
\begin{equation}\label{eq:divomegap}
 \operatorname{div} (P^T \nabla P - P^T \Omega P) = 0 \quad \mbox{in }B_1(0).
\end{equation}
Then, by solving an extra system of PDEs Rivi\`{e}re finds an invertible matrix $A \in W^{1,2}\cap L^\infty(B_1(0),GL(n))$ such that
\begin{equation}\label{eq:divomegaa}
 \operatorname{div} (\nabla A - A\Omega) = 0 \quad \mbox{in }B_1(0).
\end{equation}
Using this, \eqref{eq:upde} transforms into
\[
 \operatorname{div} (A\nabla u) = (\nabla A - A \Omega)\cdot \nabla u \quad \mbox{in }B_1(0).
\]
By \cite{Mueller90}, \cite{CLMS} the right hand side lies in the Hardy-space $\mathcal{H}$. This is a strict subspace of $L^1$ featuring a good behavior when being convoluted with Calderon-Zygmund kernels, implying continuity of $u$. (A great source on this is e.g. \cite{Stein93}, for an overview with a focus on PDE one might also want to look into \cite{Semmes94}). The way of constructing $A$ seems to be purely two-dimensional, as it crucially relies on $L^\infty$-bounds of Wente's inequality (for the statement see \cite[Lemma A.1]{Riviere06}, for proofs see \cite{Wente69}, \cite[Chapter II]{Tartar84}, \cite[Lemma A.1]{BrC84} or \cite[Chapter 3]{Helein}).\\
Adapting this idea in its spirit to higher dimensions, in \cite{StruweRiviere} it is shown how to prove regularity without having to construct $A$ but working with $P$ instead.\\
In order to construct $P$, in \cite{Riviere06} a beautiful yet a bit involved technique from Uhlenbeck's \cite{Uhlenbeck} is applied, which relies on a continuity argument and the implicit function theorem.\\
The purpose of this note is to remark the easy connection between the moving frame method H{\'{e}}lein developed in the 90's (\cite{Helein91}, see also \cite{Helein} and the appendix of \cite{Chone}) and Rivi\`{e}re's construction of the Coulomb gauge $P$. This implies a very easy proof for \cite[Lemma A.3]{Riviere06} which just consists of setting $P$ to be the minimizer of the following energy integral very well known from the moving frame technique
\begin{equation}\label{eq:energy}
 E(Q) := \int\limits_{B_1(0)} \abs{Q^T \nabla Q - Q^T \Omega Q}^2,\quad Q\in W^{1,2}(B_1(0),SO(n)).
\end{equation}
Here, $W^{1,2}(B_1(0),SO(n))$ are all those functions $Q \in W^{1,2}(B_1(0),\R^{n \times n})$ such that $Q(x)$ is an orthogonal matrix with $\det Q(x) > 0$ almost everywhere in $B_1(0)$. Neither is there any theory of Hardy and BMO spaces necessary, nor do we use an approximation of $\Omega$ or some kind of smallness conditions on $\Omega$, all of which is needed in the proof of \cite[Lemma A.3]{Riviere06}. Furthermore, all the estimates on $\nabla P$ as in \cite[Lemma A.3]{Riviere06} follow in a trivial way. Let us stress that as well smallness as also the duality of Hardy- and BMO-space is still needed in the proof of regularity later on, just not at this stage.\\
From this, one gets regularity of solutions to \eqref{eq:upde} just by applying a Dirichlet growth estimate for small exponents to
\begin{equation} \label{eq:trafopde}
 \operatorname{div} (P^T \nabla u) = (P^T\nabla P - P^T \Omega P) P^T \nabla u.
\end{equation}
The latter was done in \cite{StruweRiviere}. Although the Dirichlet growth approach cannot be applied without the fundamental fact that by \eqref{eq:divomegap} the quantity $(P^T\nabla P - P^T \Omega P) \nabla u$ lies in the Hardy space (cf. \cite{CLMS}), one can pinpoint the use of this information to exactly one inequality which can be proved in an elementary way bypassing Hardy-BMO theory (cf. \cite{Chanillo91}, \cite{ChanilloLi92}, \cite{HSZTBA}).\\
All in all, constructing $P$ by minimizing \eqref{eq:energy} as in \cite{Helein91}, and then using the Dirichlet growth theorem as in \cite{StruweRiviere} one gets a simplified proof of \cite[Theorem I.1]{Riviere06}. Interestingly, this simplification can be applied as well to the case of dimensions greater than two: In order to prove \cite[Theorem 1.1]{StruweRiviere} one does not need to prove that $P$ belongs to some Morrey-space. The $L^2$-estimates on the gradient of $P$ resulting from minimizing \eqref{eq:energy} are sufficient.\\
\\
As comparison, let us shortly remind the reader of some steps of the moving frame technique - for more details the reader is referred to \cite{Helein} as well as the appendix of \cite{Chone}: Let $v \in W^{1,2}(B_1(0),\mathcal{N})$ weakly satisfy
\begin{equation}\label{eq:lapvorth}
 \lap v\ \bot\ T_v\mathcal{N} \quad \mbox{in $B_1(0)$},
\end{equation}
where $\mathcal{N}$ is an $n$-dimensional compact manifold which is isometrically embedded in $\R^N$. Thus, orthogonality means orthogonality in the sense of the Euclidean metric in $\R^N$. Assume furthermore that there is some moving frame on $(\mathcal{N},T\mathcal{N})$: That is, there are smooth tangent vectors $e_i: \mathcal{N} \to T\mathcal{N}$, $1 \leq i \leq n$, such that at any point $y \in \mathcal{N}$ the $e_i(y)$ build an orthonormal basis of the tangential space $T_y\mathcal{N}$. It is then not too difficult to see, that by \eqref{eq:lapvorth}
\[
 \operatorname{div} (\langle e_i(v), \nabla v\rangle ) = \langle e_i(v), \nabla e_k(v) \rangle\ \langle e_k(v), \nabla v \rangle, \quad 1 \leq i \leq n.
\]
The scalar product $\langle \cdot, \cdot \rangle$ denotes the Euclidean scalar product in $\R^N$, that is $\langle e_i(v), \nabla v\rangle := \sum_{a=1}^N e^a_i(v) \nabla v^a$. Setting $\Omega_{ij} :=\langle e_i, \nabla e_j \rangle$ one observes the similarity with \eqref{eq:upde} - instead of $\nabla u^i$ in \eqref{eq:upde}, here we have $\langle e_i(v), \nabla v \rangle$. But from the point of view of growth estimates regarding $\nabla v$ this is not a big difference: Pointwise a.e. one can compare the size of $\left (\langle e_i(v), \nabla v \rangle \right )_{i=1}^n$ to the size of $\nabla v$.\\
The next step is to transform this moving frame $(e_i\circ v)_{i=1}^n$ into one that is more suitable for our equation, namely we seek $f_i = P^T_{ik}\ e_k\circ v$, where $P \in W^{1,2}(B_1(0),SO(n))$ is almost everywhere an orthogonal matrix in $\R^{n\times n}$, such that
\[
 0 = \operatorname{div} (\langle f_i, \nabla f_j \rangle) = \operatorname{div} (P^T_{ik} \nabla P_{jk} + P^T_{ik}\langle e_k(v), \nabla e_l(v) \rangle P_{jl}).
\]
Again, one should compare the latter expression to \eqref{eq:divomegap} with $\Omega_{ij}$ replaced by $\langle e_i(v), \nabla e_j(v) \rangle$. The point is, the moving frame technique and Rivi\`{e}re's approach in \cite{Riviere06} are very similar. The crucial additional ingredient in the latter is that one does not need to construct a moving frame $(e_i)_{i=1}^n$ in order to get an antisymmetric structure on the right hand side of certain Euler-Lagrange equations. In fact, this structure can be observed even in cases where one does not know how to get a moving frame like $(e_i)_{i=1}^n$ to start with.\\
\\
Let us stress that in the original regularity proof in \cite{Riviere06} which from the gauge transformation $P$ constructs the somewhat more elegant transformation $A$ satisfying \eqref{eq:divomegaa}, the main focus lies on the construction of good conservation laws for equations like \eqref{eq:upde}. That way one e.g. can avoid a Dirichlet Growth estimate below the natural exponent. Moreover, convergence issues become easier - once the preliminary work of constructing $P$ and then $A$ is done.\\
The connection between the techniques of minimizing the energy as in \eqref{eq:energy} and the construction of a Coulomb gauge by methods of Uhlenbeck is not new. In fact, in \cite{Wang05} in order to construct a moving frame for $n$-harmonic maps Uhlenbeck's approach is used. This is necessary because it is not clear how to obtain $W^{1,n}$-estimates of the transformation $P$ resulting from the $W^{1,2}$-minimization \eqref{eq:energy}.\\
\\
The structure of this note is as follows: In Section \ref{sec:energy} we will state the construction of $P$ to solve \eqref{eq:divomegap} by minimizing \eqref{eq:energy}. Section \ref{sec:nonash} contains a remark on how to avoid Nash-Moser's isometric imbedding theorem in order to prove Hildebrandt's conjecture. Finally, in the appendix we will sketch how to derive regularity from systems like \eqref{eq:trafopde} given that \eqref{eq:divomegap} is satisfied. There we also remark, that the $L^2$-estimates resulting from minimizing \eqref{eq:energy} are enough to prove partial regularity in dimensions $m > 2$ as in \cite{StruweRiviere}.\\
\\
As for our \textit{notation}, for a matrix or tensor $A$ we will denote $\abs{A}$ to be the Hilbert-Schmidt-norm of this quantity.\\
Mappings like the solution $u$ of $\eqref{eq:upde}$ will usually map the unit ball $B_1(0) \subset \R^m$ into the $n$-dimensional target manifold $\mathcal{N} \subset \R^N$ or simply into $\R^n$. Most of the time, instead of the Ball $B_1(0)$ one could use other kinds of sets to obtain the same results.\\
By  $\nabla = [\partial_1,\partial_2,\ldots,\partial_m]^T$ we denote the gradient. If $m =2$ the formally orthogonal gradient will be denoted by $\nabla^\bot = [-\partial_2,\partial_1]^T$.\\
The special orthogonal group in $\R^{n\times n}$ is denoted by $SO(n)$; $so(n)$ are all those matrices $(A_{ij})_{ij} \in \R^{n\times n}$ such that $A_{ij} = - A_{ji}$.\\
Many times, our constants depend on the dimensions involved. Further dependencies are usually clarified by a subscript. That is, a constant $C_p$ may depend on the dimensions as well as on $p$. Without further notice constants denoted by $C$ may change from line to line.

\vspace{2ex}\noindent
{\bf Acknowledgement.} It is a pleasure to thank Pawe{\l} Strzelecki for motivating the author to write this note down and for his and the University of Warsaw's hospitality.\\
\section{Direct Construction of Coulomb-Gauge}
\label{sec:energy}
In this section we prove, by elementary methods, the following theorem:
\begin{theorem}\label{th:energy}
(\cite{Helein91}, \cite[Lemma A.4, A.5]{Chone}; \cite[Chapter 4]{Helein}; \cite[Lemma~2.7]{Uhlenbeck}, \cite[Lemma~A.3]{Riviere06})\\
Let $D \subset \R^m$ be a smoothly bounded domain, $\Omega_{ij} \in L^2(D,\R^m)$, $\Omega_{ij} = -\Omega_{ji}$. Then there exists $P \in W^{1,2}(D,SO(n))$ such that
\[
 \operatorname{div} (P^T \nabla P - P^T \Omega P) = 0 \quad \mbox{in $D$,}
\]
and
\[
 \Vert \nabla P \Vert_{L^2(D)} + \Vert P^T \nabla P - P^T \Omega P \Vert_{L^2(D)} \leq 3\Vert \Omega \Vert_{L^2(D)}
\]
holds.
\end{theorem}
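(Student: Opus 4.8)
The plan is to obtain $P$ as a minimizer of the energy $E$ in \eqref{eq:energy} over the space $W^{1,2}(D,SO(n))$, and then to extract both the Euler--Lagrange equation and the quantitative estimate from the minimality. First I would check that $E$ is coercive in the right sense: using $|Q^T\nabla Q| = |\nabla Q|$ (since $Q$ is orthogonal, $Q^T\nabla Q \in so(n)$ and left multiplication by $Q$ is an isometry for the Hilbert--Schmidt norm) together with the triangle inequality and $|Q^T\Omega Q| = |\Omega|$, one gets $\|\nabla Q\|_{L^2} \le E(Q)^{1/2} + \|\Omega\|_{L^2}$, so sublevel sets of $E$ are bounded in $W^{1,2}$. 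Since the constant map $Q \equiv \mathrm{Id}$ lies in the class with $E(\mathrm{Id}) = \|\Omega\|_{L^2}^2$, the infimum $m_0 := \inf E$ satisfies $m_0 \le \|\Omega\|_{L^2}^2$.

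Next I would run the direct method. Take a minimizing sequence $Q_j$; it is bounded in $W^{1,2}$, hence (after passing to a subsequence) converges weakly in $W^{1,2}$ and strongly in $L^2$ and a.e. to some $P$. The a.e. convergence preserves the pointwise constraints $P^T P = \mathrm{Id}$, $\det P > 0$ a.e., so $P \in W^{1,2}(D,SO(n))$. For lower semicontinuity of $E$ I would write $E(Q) = \int |\nabla Q|^2 - 2\int \langle \nabla Q, Q\,\Omega\, Q^T \rangle_{\text{appropriately}} + \int |\Omega|^2$ — more precisely expand the integrand, noting that the cross term is linear in $\nabla Q$ with a coefficient of the form (strongly $L^2$-convergent matrix)$\times\Omega$, which passes to the limit, while $\int|\nabla Q|^2$ is weakly lower semicontinuous; hence $E(P) \le \liminf E(Q_j) = m_0$, so $P$ is a minimizer and $E(P) = m_0 \le \|\Omega\|_{L^2}^2$.

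Then I would derive the Euler--Lagrange equation by the standard $SO(n)$-variation: for $\alpha \in C_c^\infty(D, so(n))$ set $Q_t := e^{t\alpha} P \in W^{1,2}(D,SO(n))$ and compute $\frac{d}{dt}\big|_{t=0} E(Q_t) = 0$. A direct computation — using $\partial_t|_{t=0}(Q_t^T\nabla Q_t) = P^T(\nabla\alpha)P + [P^T\nabla P, \,?]$ type identities and the antisymmetry of $\alpha$ — collapses to $\int \langle \nabla\alpha, \,P^T\nabla P - P^T\Omega P\rangle = 0$ for all such $\alpha$; since $P^T\nabla P - P^T\Omega P$ is already $so(n)$-valued, this is exactly the weak form of $\operatorname{div}(P^T\nabla P - P^T\Omega P) = 0$. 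Finally, for the estimate: $\|P^T\nabla P - P^T\Omega P\|_{L^2} = E(P)^{1/2} \le \|\Omega\|_{L^2}$, and combining with the coercivity bound above, $\|\nabla P\|_{L^2} \le E(P)^{1/2} + \|\Omega\|_{L^2} \le 2\|\Omega\|_{L^2}$, so the sum is at most $3\|\Omega\|_{L^2}$, as claimed.

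The main obstacle I anticipate is the careful bookkeeping in the first variation: one must verify that $Q_t = e^{t\alpha}P$ genuinely stays in $W^{1,2}(D,SO(n))$ (it does, since $e^{t\alpha} \in W^{1,2}\cap L^\infty$ and multiplication is continuous on these spaces with the given exponents in any dimension $m$, using $\alpha \in C_c^\infty$ and $P \in W^{1,2}$) and that differentiation under the integral sign is justified near $t = 0$ — the integrand is smooth in $t$ with derivative dominated by an $L^1$ function built from $|\nabla P|$, $|\nabla\alpha|$, $|\Omega|$ and bounded matrix factors. The lower semicontinuity step is routine once the cross term is isolated, and no smallness of $\Omega$, no approximation, and no Hardy--BMO input enter anywhere.
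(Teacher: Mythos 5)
Your overall plan coincides with the paper's: minimize $E$ over $W^{1,2}(D,SO(n))$ by the direct method, then read off the Coulomb condition from the first variation, and get the estimate from $E(P)\le E(I)=\|\Omega\|_{L^2}^2$ together with $|\nabla Q|=|Q^T\nabla Q|\le|\Omega^Q|+|\Omega|$. Your lower-semicontinuity step is a slightly different (and perfectly valid) route: you expand $E(Q)=\int|\nabla Q|^2-2\int\langle\nabla Q,\Omega Q\rangle+\int|\Omega|^2$ and pair weak $L^2$-convergence of $\nabla Q_j$ with strong $L^2$-convergence of $\Omega Q_j$ (dominated convergence, since $|Q_j-P|$ is bounded and $\to0$ a.e.); the paper instead factors $Q_k=P\cdot(P^TQ_k)$, expands in terms of $\Omega^P$, and deduces that $\nabla(P^TQ_k)\to0$ strongly. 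Both give $E(P)\le\|\Omega\|_{L^2}^2$ and hence the stated bound.

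There is, however, a genuine error in your first-variation step. You put $\alpha$ on the left, $Q_t=e^{t\alpha}P$, and claim the derivative ``collapses to $\int\langle\nabla\alpha,\,P^T\nabla P-P^T\Omega P\rangle=0$.'' It does not. Writing $\Omega^{Q}:=Q^T\nabla Q-Q^T\Omega Q$ and $\Lambda:=P\,\Omega^P P^T=(\nabla P)P^T-\Omega\in so(n)\otimes\R^m$, a short computation gives
\[
 \frac{d}{dt}\Big|_{t=0}\Omega^{Q_t}\;=\;P^T(\nabla\alpha)P-P^T[\Omega,\alpha]P,
 \qquad
 \tfrac12\frac{d}{dt}\Big|_{t=0}E(Q_t)\;=\;\int\langle\Lambda,\nabla\alpha\rangle-\int\langle\Lambda,[\Omega,\alpha]\rangle .
\]
The second integral does \emph{not} vanish: pointwise $\langle\Lambda,[\Omega,\alpha]\rangle=\sum_\mu\operatorname{tr}\bigl([\Omega_\mu,\Lambda_\mu]\alpha\bigr)$, and there is no algebraic cancellation between three distinct skew-symmetric matrices. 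The cancellation you need is $\langle A,[A,\alpha]\rangle=0$ for $A\in so(n)$, which requires the \emph{same} matrix in both slots; that is exactly what happens if you perturb on the \emph{right}. With $Q_t=Pe^{t\alpha}$ (or, as in the paper, $Pe^{\varepsilon\varphi\alpha}$ with constant $\alpha\in so(n)$ and scalar $\varphi$) one finds
\[
 \frac{d}{dt}\Big|_{t=0}\Omega^{Q_t}=\nabla\alpha+[\Omega^P,\alpha],\qquad
 \tfrac12\frac{d}{dt}\Big|_{t=0}E(Q_t)=\int\langle\Omega^P,\nabla\alpha\rangle+\int\langle\Omega^P,[\Omega^P,\alpha]\rangle,
\]
and now the commutator term vanishes pointwise since $\operatorname{tr}((\Omega^P_\mu)^T[\Omega^P_\mu,\alpha])=0$, leaving exactly the weak form of $\operatorname{div}\Omega^P=0$. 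So your left variation yields the equivalent but differently packaged identity $\operatorname{div}\Lambda=[\Omega,\Lambda]$, not the Coulomb condition in the form you claimed; to finish you would either need to switch to right multiplication or add a further argument converting $\operatorname{div}\Lambda=[\Omega,\Lambda]$ into $\operatorname{div}\Omega^P=0$. Everything else in your proposal, including the bookkeeping that $Pe^{t\alpha}\in W^{1,2}(D,SO(n))$ and the final norm estimate, is sound once this is corrected.
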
%
There are mainly two approaches. A more general but involved method is due to Uhlenbeck in \cite[Lemma 2.7]{Uhlenbeck}; for the version needed here one best consults \cite[Lemma A.3]{Riviere06}. In \cite{MuellerS} this technique is also explained in some detail. The advantage of this version is that it works in similar ways in higher dimensions and for different integrability exponents. In \cite{MeyerRiviere03}, \cite{StruweRiviere} there is a Morrey-space version of it. The disadvantage is that it is technically involved, highly indirect - it is based on the implicit function theorem and a continuity argument - and needs already the theory of Hardy spaces in form of the duality between Hardy-space and BMO in order to derive the estimates on $\nabla P$.\\
The proof of Theorem \ref{th:energy} which we like to present here, follows from the next two lemmata which use only standard calculus of variation and a bit of Linear Algebra.
\begin{lemma}(cf. \cite{Chone}, Lemma A.4)\label{la:choneex}\\
Let $D \subset \R^m$ be a bounded domain. For any $\Omega_{ij} \in L^2(D,\R^m)$, $1 \leq i,j \leq n$, there exists $P \in W^{1,2}(D,SO(n))$ minimizing the variational functional
\[
 E(Q) = \intl_{D} \abs{Q^T \nabla Q - Q^T \Omega Q}^2, \quad Q \in W^{1,2}(D,SO(n)).
\]
Furthermore, $\Vert \nabla P \Vert_{L^2(D)} \leq 2\Vert \Omega \Vert_{L^2(D)}$.
\end{lemma}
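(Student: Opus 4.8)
The plan is to apply the direct method of the calculus of variations to the functional $E$. First I would observe that $E$ is well-defined and finite on $W^{1,2}(D,SO(n))$: since $Q$ is pointwise orthogonal, $\abs{Q^T\nabla Q}$ and $\abs{Q^T\Omega Q}$ are controlled pointwise by $\abs{\nabla Q}$ and $\abs{\Omega}$ respectively, both of which are in $L^2$. Next I would check that the admissible class is nonempty — the constant map $Q\equiv \operatorname{Id}$ lies in $W^{1,2}(D,SO(n))$ — so $\inf E =: m_0 < \infty$ is a well-defined nonnegative real number, and moreover $E(\operatorname{Id}) = \int_D \abs{\Omega}^2$, which already gives the bound $m_0 \le \Vert\Omega\Vert_{L^2}^2$. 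This last point will be the source of the quantitative estimate at the end.

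The key algebraic step, which I would carry out before taking a minimizing sequence, is to extract an $H^1$-bound on $Q$ from a bound on $E(Q)$. Writing $Q^T\nabla Q = (Q^T\nabla Q - Q^T\Omega Q) + Q^T\Omega Q$ and using that left-multiplication by the orthogonal matrix $Q$ is an isometry for the Hilbert–Schmidt norm, one gets $\abs{\nabla Q} = \abs{Q^T\nabla Q} \le \abs{Q^T\nabla Q - Q^T\Omega Q} + \abs{\Omega}$ pointwise a.e. Squaring, integrating, and applying the triangle inequality in $L^2$ yields
\[
 \Vert \nabla Q\Vert_{L^2(D)} \le E(Q)^{1/2} + \Vert\Omega\Vert_{L^2(D)}.
\]
Applying this to a minimizing sequence $Q_j$ with $E(Q_j)\to m_0 \le \Vert\Omega\Vert_{L^2}^2$, and noting $\Vert Q_j\Vert_{L^\infty} = \sqrt n$ since each $Q_j(x)\in SO(n)$, gives a uniform $W^{1,2}$-bound on $Q_j$; in particular, in the limit, $E(Q_j)^{1/2}+\Vert\Omega\Vert_{L^2}\to m_0^{1/2}+\Vert\Omega\Vert_{L^2} \le 2\Vert\Omega\Vert_{L^2}$, which is where the claimed constant $2$ comes from.

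Then I would extract a subsequence with $Q_j \rightharpoonup P$ weakly in $W^{1,2}$ and, by Rellich–Kondrachov, strongly in $L^2$ and pointwise a.e.\ along a further subsequence. The pointwise convergence shows $P(x)\in SO(n)$ a.e.\ (the conditions $Q^TQ=\operatorname{Id}$ and $\det Q>0$ pass to the a.e.\ limit — strictly speaking $\det Q \ge 0$, but combined with orthogonality this forces $\det = 1$), so $P$ is admissible. For lower semicontinuity of $E$: the term $Q^T\Omega Q$ converges strongly in $L^2$ (product of an a.e.-convergent, uniformly bounded factor with a fixed $L^2$ function — dominated convergence), while $Q^T\nabla Q$ is a product of a strongly $L^2$-convergent, $L^\infty$-bounded factor $Q_j^T$ with a weakly $L^2$-convergent factor $\nabla Q_j$, hence converges weakly in $L^1$, and in fact weakly in $L^2$ once one checks the boundedness; then convexity of $\xi\mapsto\abs{\xi}^2$ gives $E(P)\le \liminf E(Q_j) = m_0$, so $P$ is a minimizer. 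Finally, the gradient estimate is exactly the displayed inequality applied to $P$, using $E(P)=m_0\le\Vert\Omega\Vert_{L^2}^2$.

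The main obstacle I anticipate is the weak-times-strong convergence of the product $Q_j^T\nabla Q_j$: one must be careful that this product converges in a topology against which $\abs{\cdot}^2$ is weakly lower semicontinuous. The clean way is to show $Q_j^T\nabla Q_j - Q_j^T\Omega Q_j \rightharpoonup P^T\nabla P - P^T\Omega P$ weakly in $L^2$ directly — the strongly convergent orthogonal factors $Q_j^T,\ Q_j$ can be "passed through" a weak $L^2$ limit because they are bounded in $L^\infty$ and converge a.e., which lets one pair against an arbitrary $L^2$ test function and conclude by splitting and dominated convergence. With that, weak $L^2$ lower semicontinuity of the $L^2$-norm finishes the argument. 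A secondary subtlety worth a line is confirming that $\det P > 0$ rather than merely $\det P \ge 0$ a.e., which is immediate from orthogonality since an orthogonal matrix has determinant $\pm 1$.
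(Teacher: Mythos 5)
Your proposal is correct, and it follows the paper's setup exactly through the coercivity and compactness steps: use $Q\equiv I$ as a competitor, exploit $\abs{\nabla Q}=\abs{Q^T\nabla Q}$ and $\abs{Q^T\Omega Q}=\abs{\Omega}$ to bound $\Vert\nabla Q_j\Vert_{L^2}$ in terms of $E(Q_j)^{1/2}+\Vert\Omega\Vert_{L^2}$, extract a weak $W^{1,2}$ limit $P$ which is a.e.\ in $SO(n)$ by Rellich and pointwise convergence, and read off the constant $2$ from $m_0\le\Vert\Omega\Vert_{L^2}^2$. (Your worry about $\det P\ge 0$ versus $\det P>0$ is harmless but unnecessary: $\det Q_j\equiv 1$ and pointwise convergence of the entries already forces $\det P=1$ a.e.) Where you diverge from the paper is the lower-semicontinuity step. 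You argue that $Q_j^T\nabla Q_j - Q_j^T\Omega Q_j\rightharpoonup P^T\nabla P - P^T\Omega P$ weakly in $L^2$ (the strong-times-weak product lemma, using a.e.\ convergence and the uniform $L^\infty$ bound on $Q_j$), and then invoke weak lower semicontinuity of $\Vert\cdot\Vert_{L^2}^2$. The paper instead performs an explicit ``completion of the square'' around the candidate limit $P$: setting $\Omega^P := P^T\nabla P - P^T\Omega P$ one has the algebraic identity
\[
 Q_k^T\nabla Q_k - Q_k^T\Omega Q_k = (P^TQ_k)^T\nabla(P^TQ_k) + (P^TQ_k)^T\Omega^P(P^TQ_k),
\]
whose Hilbert--Schmidt norm squared expands as $\abs{\nabla(P^TQ_k)}^2 + 2\langle\nabla(P^TQ_k),\Omega^P P^TQ_k\rangle + \abs{\Omega^P}^2$. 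Integrating gives $E(Q_k) = \int\abs{\nabla(P^TQ_k)}^2 + (\text{cross term}) + E(P)$, and the cross term vanishes in the limit by the same strong-times-weak mechanism you use; this gives $E(P)=\inf E$ directly and, as a bonus, shows $\int\abs{\nabla(P^TQ_k)}^2\to 0$, i.e.\ strong $W^{1,2}$-convergence $Q_k\to P$. Both routes are correct; yours is the more textbook-standard appeal to convexity/weak LSC, while the paper's yields the stronger convergence conclusion at essentially the same cost.
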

\begin{remark}
Of course, this Lemma holds as well, if one takes 'Dirichlet'-boundary data, that is, if one assumes $Q-I \in W^{1,2}_0(D,\R^{n\times n})$, where $I$ is the $n$-dimensional identity matrix.
\end{remark}

\begin{lemma}(cf. \cite[Lemma A.5]{Chone})\label{la:chonediv}\\
Critical points $P \in W^{1,2}(D,SO(n))$ of
\[
 E(Q) = \intl_{D} \abs{Q^T \nabla Q - Q^T \Omega Q}^2, \quad Q \in W^{1,2}(D,SO(n)),
\]
satisfy
\[
 \operatorname{div} (P^T_{ik} \nabla P_{kj} - P^T_{ik} \Omega_{kl} P_{lj}) = 0, \quad 1\leq i,j \leq n,
\]
provided that $\Omega_{ij} \in L^2(D,\R^m)$ and $\Omega_{ij} = -\Omega_{ji}$ for any $1 \leq i,j \leq n$.\\
\end{lemma}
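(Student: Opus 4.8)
The plan is to compute the first variation of $E$ along a path of competitors that stays in $W^{1,2}(D,SO(n))$, and then to exploit the skew-symmetry of $\Omega$ to bring the Euler--Lagrange equation into divergence form. Since $SO(n)$ is a Lie group with Lie algebra $so(n)$, the natural admissible variations of a critical point $P$ are of the form $P_t = P e^{tB}$ with $B \in C_c^\infty(D, so(n))$; these are admissible because $e^{tB}(x) \in SO(n)$ pointwise and $P_t \in W^{1,2}$. I would first record the expansion $P_t = P(I + tB + O(t^2))$, $\nabla P_t = \nabla P + t(\nabla P\, B + P\, \nabla B) + O(t^2)$, $P_t^T = (I - tB + O(t^2))P^T$ (using $B^T = -B$), and substitute into $W(Q) := Q^T\nabla Q - Q^T\Omega Q$, the integrand being $|W(Q)|^2$ in the Hilbert--Schmidt norm.

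The key computation is the linearization $\frac{d}{dt}\big|_{t=0} W(P_t)$. Writing $W_0 := W(P) = P^T\nabla P - P^T\Omega P$, a direct expansion gives $\frac{d}{dt}\big|_{t=0} W(P_t) = \nabla B + [W_0, B]$, where $[\cdot,\cdot]$ is the matrix commutator; the commutator terms come from the $-tB$ on the left of $P^T$ and the $+tB$ on the right inside both $Q^T\nabla Q$ and $Q^T\Omega Q$, while the $P\nabla B$ term contributes the $P^T P \nabla B = \nabla B$. Then $\frac{d}{dt}\big|_{t=0} E(P_t) = 2\int_D \langle W_0, \nabla B + [W_0, B]\rangle = 0$. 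The commutator term $\int_D \langle W_0, [W_0, B]\rangle$ vanishes pointwise: $\langle W_0, [W_0,B]\rangle = \operatorname{tr}(W_0^T(W_0 B - B W_0))$, and since $W_0 \in so(n)$ (because $W_0^T = \nabla P^T P - P^T\Omega^T P = -P^T\nabla P + P^T\Omega P = -W_0$, using $P^T\nabla P + \nabla P^T P = \nabla(P^TP) = 0$ and $\Omega^T = -\Omega$), one checks $\operatorname{tr}(W_0 W_0 B) = \operatorname{tr}(W_0 B W_0)$ by cyclicity, so the difference is zero. This leaves $\int_D \langle W_0, \nabla B\rangle = 0$ for all $B \in C_c^\infty(D,so(n))$, which is precisely the weak form of $\operatorname{div} W_0 = 0$ — but only tested against skew-symmetric $B$.

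The remaining point, and the one I expect to be the only genuine subtlety, is upgrading "$\operatorname{div} W_0 = 0$ tested against $so(n)$-valued functions" to "$\operatorname{div} W_0 = 0$" as an identity of matrices. This is immediate once one observes that $W_0$ is itself skew-symmetric: for a skew-symmetric matrix field $M$, the conditions $\int \langle M, \nabla B\rangle = 0$ for all skew-symmetric $B$ and for all matrix-valued $B$ are equivalent, because $\langle M, \nabla B\rangle = \langle M, \tfrac12(\nabla B - \nabla B^T)\rangle$ depends only on the skew part of $\nabla B$, and testing against an arbitrary $B$ is the same as testing against its skew part. Hence $\operatorname{div}(W_0)_{ij} = 0$ for all $i,j$, which is the claim $\operatorname{div}(P^T_{ik}\nabla P_{kj} - P^T_{ik}\Omega_{kl}P_{lj}) = 0$. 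I would also remark that all integrals above are well-defined: $W_0 \in L^2$ by Lemma~\ref{la:choneex} (or directly since $\nabla P, \Omega \in L^2$ and $P \in L^\infty$), and $\nabla B, [W_0,B] \in L^2$ with compact support, so the first variation is finite and the differentiation under the integral sign is justified by dominated convergence with the $O(t^2)$ remainder controlled uniformly on $\operatorname{supp} B$.
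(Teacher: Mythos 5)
Your proof is correct and takes essentially the same approach as the paper: exponential variations $P_t = P\,e^{tB}$ (the paper takes $B = \varphi\alpha$ with $\varphi$ scalar and $\alpha\in so(n)$ constant, which is equivalent), linearization of $W_0$ into $\nabla B + [W_0,B]$, pointwise vanishing of the commutator contribution, and the observation that testing against skew-valued $B$ suffices because $W_0$ is itself skew. Incidentally, the paper's text asserts that the commutator $\Omega^P\alpha - \alpha\Omega^P$ is \emph{symmetric}, which is not correct (the commutator of two skew matrices is skew-symmetric, as you note); your cyclicity-of-trace argument is the accurate reason the pointwise inner product vanishes.
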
%
\begin{proof}[Proof of Lemma \ref{la:choneex}]
The function $Q \equiv I := (\delta_{ij})_{ij}$ is clearly admissible. Thus, there exists a minimizing sequence $Q_k \in W^{1,2}(D,SO(n))$ such that
\[
 E(Q_k) \leq E(I) = \Vert \Omega \Vert_{L^2}^2, \quad k \in \N.
\]
By a.e. orthogonality of $Q_k(x) \in SO(n)$ we know that $Q_k(x)$ is bounded and
\[
 \abs{\nabla Q_k} = \abs{Q_k^T \nabla Q_k} \leq \abs{Q_k^T \nabla Q_k - Q_k^T \Omega Q_k} + \abs{\Omega}\quad \mbox{a.e. in $D$}; 
\]
thus
\[
 \Vert \nabla Q_k \Vert_{L^2(D)}^2 \leq 2(E(Q_k) + \Vert \Omega \Vert_{L^2(D)}^2) \leq 4\Vert \Omega \Vert_{L^2(D)}^2.
\]
Up to choosing a subsequence, we can assume that $Q_k$ converges weakly in $W^{1,2}$ to $P \in W^{1,2}(D,\R^{m\times m})$. At the same time it shall converge strongly in $L^2$, and pointwise almost everywhere. The latter implies $P^T P = \lim_{k\to \infty} Q_k^T Q_k = I$, and $\det(P) = 1$, that is $P \in SO(n)$ almost everywhere.\\
Denoting $\Omega^P := P^T \nabla P - P^T \Omega P$ we obtain
\[
 Q_k^T \nabla Q_k - Q_k^T \Omega Q_k = (P^T Q_k)^T \nabla (P^T Q_k) + (P^T Q_k)^T \Omega^P (P^T Q_k),
\]
and consequently
\[ 
\begin{ma}
 \abs{Q_k^T \nabla Q_k - Q_k^T \Omega Q_k}^2 &=& \abs{\nabla (P^T Q_k) + \Omega^P P^T Q_k}^2\\ 
&=& \abs{\nabla (P^T Q_k)}^2 + 2 \langle \nabla (P^T Q_k), \Omega^P P^T Q_k \rangle + \abs{\Omega^P}^2,
\end{ma}
\]
where in this case $\langle \cdot , \cdot\rangle$ is just the Hilbert-Schmidt scalar product for matrices. This implies
\[
\begin{ma}
 E(Q_k) &=& \intl_{D} \abs{\nabla (P^T Q_k)}^2 + 2 \langle \nabla (P^T Q_k), \Omega^P P^T Q_k \rangle + E(P)\\
&\geq& \intl_{D} \abs{\nabla (P^T Q_k)}^2 + 2 \intl_{D} \langle \nabla (P^T Q_k), \Omega^P P^T Q_k \rangle + \inf_{Q} E(Q).
\end{ma}
\]
The middle part of the right hand side converges to zero as $k \to \infty$. To see this, one can check that $\Omega^P P^T Q_k$ converges to $\Omega^P$ almost everywhere. Lebesgue's dominated convergence theorem implies strong convergence in $L^2$. On the other hand, $\nabla (P^T Q_k)$ converges to zero weakly in $L^2$.\\
Hence, using $E(Q_k) \xrightarrow{k \to \infty} \inf_{Q} E(Q)$, we have strong $W^{1,2}$-convergence of $P^T Q_k$ to $I$: Thus, $Q_k$ converges strongly to $P$, which readily implies minimality of $P$.\\
\end{proof}%
\begin{proof}[Proof of Lemma \ref{la:chonediv}]
Let $P$ be a critical point of $E(Q)$. A valid perturbation $P_\varepsilon$ is the following
\[
 P_\varepsilon := P e^{\varepsilon \varphi \alpha} = P + \varepsilon \varphi P\alpha + o(\varepsilon) \in W^{1,2}(D,SO(n))
\]
for any $\varphi \in C^\infty(\overline{D})$, $\alpha \in so(n)$ and $\varepsilon \to 0$. 
This uses the simple algebraic fact that the exponential function applied to a skew-symmetric matrix is an orthogonal matrix; or from the point of view of geometry, that the space of skew-symmetric matrices is the tangential space to the manifold $SO(n) \subset \R^{n\times n}$ at the identity matrix. Then,
\[
 P^{T}_\varepsilon = P^T - \varepsilon \varphi \alpha P^T + o(\varepsilon),
\]
\[
 \nabla P_\varepsilon = \nabla P + \varepsilon \varphi \nabla P\ \alpha + \varepsilon \nabla \varphi\ P \alpha + o(\varepsilon).
\]
Thus, denoting again $\Omega^P := P^T \nabla P - P^T \Omega P \in so(n)\otimes \R^m$, we obtain
\[
 \Omega^{P_\varepsilon} = \Omega^P + \varepsilon \varphi (\Omega^P \alpha - \alpha \Omega^P) + \varepsilon \nabla \varphi \alpha+ o(\varepsilon).
\]
The matrix $\Omega^P \alpha - \alpha \Omega^P$ is symmetric by antisymmetry of $\Omega^P$ and $\alpha$ which yields
\[
\sum_{i,j} (\Omega^P)_{ij}\cdot (\Omega^P \alpha - \alpha \Omega^P)_{ij} = 0 \mbox{\quad pointwise almost everywhere.}
\]
It follows that,
\[
 \abs{\Omega^{P_\varepsilon}}^2 = \abs{\Omega^P}^2 + 2 \varepsilon (\Omega^P)_{ij} \alpha_{ij} \nabla \varphi + o(\varepsilon),
\]
which readily implies
\[
 0 = \frac{d}{d\varepsilon} \bigg \vert_{\varepsilon = 0} E(P_\varepsilon) = \intl_{D} (\Omega^P)_{ij} \alpha_{ij}\cdot \nabla \varphi.
\]
This is true for any $\varphi \in C^\infty(\overline{D})$ and $\alpha \in so(n)$. Setting for arbitrary $1 \leq s,t \leq n$ our $\alpha_{ij} := \delta^s_i \delta^t_j - \delta^s_j \delta^t_i$, we arrive at
\[
 \operatorname{div} (\Omega^P)_{st} = 0 \quad \mbox{in $D$,}\quad 1 \leq s,t \leq n.
\]
\end{proof}
\begin{remark}
The disadvantage of this method is the fact that we do not know of a short and direct way to get better estimates on $P$ than the ones obtained here. That is, it does not seem to be clear that $\Omega \in L^p$ yields $P \in W^{1,p}$. On the other hand, this technique can be easily adapted to e.g. the case of different measures instead of the Lebesgue measure.\\
Interestingly, the knowledge that $\Vert \nabla P \Vert_{L^2} \leq C\ \Vert \Omega \Vert_{L^2}$ is sufficient also for partial regularity in dimensions $m > 2$. We will observe this in the appendix by a tiny modification of the proof in \cite{StruweRiviere}.
\end{remark}

\section{Hildebrandt's conjecture}
\label{sec:nonash}
In this section we sketch a proof of Hildebrandt's conjecture \cite{Hil82}, \cite{Hil83} stating that critical points of conformally invariant variational functionals on maps $v \in W^{1,2}(D,\R^n)$ where $D \subset \R^2$ are continuous: We construct from Gr\"uter's \cite{Grueter} characterization directly a Rivi\`{e}re-type system - avoiding the Nash-Moser-embedding theorem as in e.g. \cite{Chone} and \cite[Theorem I.2]{Riviere06}.\\
As explained for example in \cite[Section 1.2]{Helein}, the Nash-Moser-theorem is used to avoid the appearance of terms involving Christoffel-symbols in the Euler-Lagrange equations of harmonic maps or - more generally - conformally invariant variational functionals:
Let $D \subset \R^2$ be an open set. For $v \in W^{1,2}(D,\R^n)$ we define the functional
\[
 \mathcal{F}(v) \equiv \mathcal{F}_D(v) = \intl_D F(v(x),\nabla v(x))\ dx,
\]
where $F: \R^n \times \R^{2n} \to \R$ is of class $C^1$ with respect to the first entry and of class $C^2$ with respect to the second entry. The functional $\mathcal{F}$ is called conformally invariant if
\[
 \mathcal{F}_D(v) = \mathcal{F}_{D'}(v \circ \phi)
\]
for every smooth $v: D \to \R^n$ and every smooth conformal diffeomorphism $\phi: D' \to D$. Suppose $\mathcal{F}$ is conformally invariant and that for some $\Lambda > 0$
\[
 \frac{1}{\Lambda} \abs{p}^2 \leq F(v,p) \leq \Lambda \abs{p}^2 \quad \mbox{for all $v \in \R^n$, $p \in \R^{2n} $.}
\]
Then, by \cite[Theorem 1]{Grueter}, there exists a positive, symmetric matrix $(g_{ij})$ and a skew symmetric matrix $(b_{ij})$ such that
\[
 F(v,p) = g_{ij}(v) p^i\cdot p^k + b_{ij}(v) \det(p^i,p^j),
\]
and hence
\[
 \mathcal{F}(v) = \intl_D g_{ij}(v) \nabla v^i\cdot \nabla v^k + b_{ij}(v) \nabla v^i \cdot \nabla^\bot v^j.
\]
Recall that $\nabla^\bot = (-\partial_y,\partial_x)^\bot$. Let us interpret $(g_{ij})_{i,j=1}^n$ as a metric of the target space $\R^n$. As in \cite[(2.7)]{Grueter} Euler-Lagrange-equation could then be written as
\begin{equation} \label{eq:nnm:elwonm}
 2\lap v^i + \Gamma_{kl}^i(u) \nabla u^k \cdot \nabla u^l = g^{ij} \{ \partial_l b_{jk} + \partial_j b_{kl} + \partial_k b_{lj}\} (u)\ \nabla u^k \cdot \nabla^\bot u^l,
\end{equation}
where
\[
 \Gamma_{kl}^i = g^{ij} \{\partial_l g_{jk} - \partial_j g_{kl} + \partial_k g_{lm} \}
\]
are the Christoffel symbols corresponding to the metric $(g_{ij})$. Here, we have denoted the inverse of $(g_{ij})$ by $(g^{ij})$. Let
\[
 \Omega_{jk} := \{ \partial_l b_{jk} + \partial_j b_{kl} + \partial_k b_{lj}\} (u)\ \nabla^\bot u^l
\]
which is antisymmetric. Equation \eqref{eq:nnm:elwonm} then reads as
\begin{equation} \label{eq:nnm:elwonm2}
 2\lap u^i + \Gamma_{kl}^i(u) \nabla u^k \cdot \nabla u^l = g^{ij}(u)\ \Omega_{jk} \cdot \nabla u^k.
\end{equation}
At first glance, \eqref{eq:nnm:elwonm2} does not seem to fit into the setting of \eqref{eq:upde} because in general $(g_{ij})$ is not the standard Euclidean metric on $\R^n$.\\
The Nash-Moser-Theorem (cf. \cite{Nash56}, \cite{Kuiper55}, \cite{Guenther91}, \cite{Hamilton82}) solves this problem: It states that there is a manifold $\mathcal{N} \subset \R^{N}$, $N \geq n$, and a $C^1$-diffeomorphism $T$ mapping $(\R^n,g_{ij})$ isometrically into $(\mathcal{N},c_{ij})$ where $c_{ij}$ is the induced $\R^N$-metric on $\mathcal{N}$. That is, $T: (\R^n,g_{ij}) \to \mathcal{N}$ and
\begin{equation}\label{eq:nonash:isometry}
 \langle dT_x \left (\frac{\partial}{\partial x^i} \right ),dT_x \left (\frac{\partial}{\partial x^j} \right ) \rangle_{\R^N} = g_{ij}(x), \quad x \in \R^n, \quad 1 \leq i,j \leq n.
\end{equation}
Here, $\left ( \frac{\partial}{\partial x^i} \right )_{i=1}^n$ denotes the standard euclidean basis in $\R^n$. Using this isometric diffeomorphism $T$, we introduce an adapted functional $\widetilde {\mathcal{F}}$ defined on mappings $\tilde{v} \in W^{1,2}(D,\mathcal{N})$ of which $T(u)$ is a critical point. Looking at the Euler-Lagrange equations of this new $\widetilde{\mathcal{F}}$, the fact that the metric on $\mathcal{N}$ is induced by the surrounding space $\R^N$ will imply trivial Christoffel-symbols. On the other hand, the additional side-condition $\tilde{v}(x) \in \mathcal{N}$ a.e. will bring up a term involving the second fundamental form of the embedding $\mathcal{N} \subset \R^N$. This new term can be rewritten into the form of the right hand side of \eqref{eq:upde} as was observed in \cite{Riviere06}.\\
In fact, setting 
\[
\tilde{b}_{ab} := (dT^a\left (\frac{\partial}{\partial x^k} \right )\ g^{ki}\ b_{ij}\ g^{jl}\ dT^b \left (\frac{\partial}{\partial x^l} \right ))\circ T^{-1}
\]
 we obtain
\[
 \mathcal{F}(v) = \intl_D \abs{\nabla T(v)}^2_{\R^N} + \sum_{a,b = 1}^N\intl_D \tilde{b}_{ab}(Tv) \nabla T^a(v) \cdot \nabla^\bot T^b(v).
\]
Consequently, $u$ is a critical point of $\mathcal{F}$ if and only if $T(u)$ is a critical point of
\[
 \widetilde{\mathcal{F}}(\tilde{v}) = \intl_D \abs{\nabla \tilde{v}}^2 + \sum_{a,b = 1}^N\tilde{b}_{ab}(\tilde{v}) \nabla \tilde{v}^a \cdot \nabla^\bot \tilde{v}^b, \quad \tilde{v} \in W^{1,2}(D,\mathcal{N}).
\]
One checks that $\tilde{b}$ is antisymmetric. Hence, assuming that the second fundamental form of the embedding $\mathcal{N} \subset \R^N$ is bounded, one can proceed as in \cite[Theorem I.2]{Riviere06} to see that the Euler-Lagrange equation of $\widetilde{\mathcal{F}}$ is a system of type \eqref{eq:upde}. Thus, regularity of $T(u)$, $u$ is implied.\\
\\
The proof of the Nash-Moser embedding is quite involved. However, it can be avoided easily by the following approach: A critical point $u \in W^{1,2}(D,\R^n)$ of $\mathcal{F}$ weakly satisfies \eqref{eq:nnm:elwonm} or equivalently for $1 \leq j \leq n$
\begin{equation}\label{eq:nonash:el}
\begin{split}
 &-\operatorname{div} (2g_{jk}(u) \nabla u^k) + (\partial_j  g_{kl})(u) \nabla u^k \cdot \nabla u^l\\
= &\operatorname{div} (2b_{jk}(u) \nabla^\bot u^k) - (\partial_j  b_{kl})(u) \nabla u^k \cdot \nabla^\bot u^l.
\end{split}
\end{equation}

By algebraic calculations one constructs vector functions $e_i: \R^n \to \R^n$, $1 \leq i,j \leq n$, such that pointwise
\begin{equation}
\label{eq:nonash:eisom}
 \langle e_i, e_j \rangle_{\R^n} = g_{ij}.
\end{equation}
In order to construct $T$ as in \eqref{eq:nonash:isometry} one would be tempted to integrate, that is, to set
\[
 d T \left (\frac{\partial}{\partial x^i} \right ) := e_i,
\]
and therefore one would need $e_i$ satisfying \eqref{eq:nonash:eisom} and
\begin{equation}\label{eq:nonash:curle}
 \partial_j e_i - \partial_i e_j = 0,\quad 1\leq i,j\leq n.
\end{equation}
One observes now that the latter quantity is a skew symmetric one. That is, the error one would make in \eqref{eq:nonash:el} assuming \eqref{eq:nonash:curle} to hold is not a bad one - it fits into the setting of Rivi\`{e}re's system \eqref{eq:upde}. In fact, the following lemma holds, which by the techniques of \cite{StruweRiviere}, see also the appendix, Remark \ref{rm:dg:simsys}, implies regularity.
\begin{lemma}
Let $u \in W^{1,2}(D,\R^n)$ be a weak solution of
\begin{equation} \label{eq:nonash:elg} 
 -\operatorname{div}(2g_{ik}(u) \nabla u^k) + (\partial_i  g_{kl})(u) \nabla u^k \cdot \nabla u^l = \Omega_{ik} \cdot \nabla u^k + \nabla^\bot b_{ik} \nabla u^k.
\end{equation}
Assume that $g,g^{-1} \in W^{1,\infty}(\R^n,GL(n))$ are symmetric and positive definite, $b_{jk} \in W^{1,2}(D)$, $\Omega_{ij} = - \Omega_{ji} \in L^2(D,\R^2)$.\\
Then there are $A \in W^{1,2}\cap L^\infty(D,GL(m))$, $\widetilde{\Omega}_{ij} = - \widetilde{\Omega}_{ji} \in L^2(D,\R^2)$ such that
\[
 \operatorname{div} (A_{ik} \nabla u^k) = \widetilde{\Omega}_{ik}\cdot A_{kl} \nabla u^l + \nabla^\bot b_{ik} \cdot \nabla u^k.
\]
\end{lemma}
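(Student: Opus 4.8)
The plan is to realize $A$ through H\'elein's moving frame, exactly in the form half-indicated above around \eqref{eq:nonash:eisom}--\eqref{eq:nonash:curle}: build the frame $(e_i)$ by linear algebra from $g$, let $A$ be (essentially) that frame evaluated along $u$, and observe that the failure of the integrability condition \eqref{eq:nonash:curle}, being skew in its two indices, is precisely what turns the Christoffel-type terms of the Euler--Lagrange equation into an antisymmetric $\widetilde\Omega$.

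\textbf{Construction of the frame and of $A$.} Since $g,g^{-1}\in W^{1,\infty}(\R^n,GL(n))$ are symmetric and positive definite, the symmetric positive square root $E:=g^{1/2}$ lies in $W^{1,\infty}(\R^n,GL(n))$ with $E^{-1}=g^{-1/2}\in W^{1,\infty}$; its rows $e_i$ satisfy $\langle e_i,e_j\rangle_{\R^n}=g_{ij}$, i.e.\ \eqref{eq:nonash:eisom}. Put $A(x):=E(u(x))$. Composing the Lipschitz map $E$ with $u\in W^{1,2}(D)$ gives $A\in W^{1,2}\cap L^\infty(D,\R^{n\times n})$; since $\det E=\sqrt{\det g}>0$ and $E,E^{-1}$ are bounded, $A$ is a.e.\ invertible with $A^{-1}\in L^\infty$. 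Write $w:=A\nabla u$, so $w^a=e_i^a(u)\,\nabla u^i$; then $\nabla u=A^{-1}w$ and $w,\nabla u$ are pointwise comparable in size.

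\textbf{The transformed equation and the skew structure (the crux).} Differentiating, $\operatorname{div}w^a=(\partial_j e_i^a)(u)\,\nabla u^i\cdot\nabla u^j+e_k^a(u)\,\lap u^k$. Insert $\lap u^k$ from \eqref{eq:nonash:elg} in non-divergence form, using $g_{ik}(u)\lap u^k=\operatorname{div}(g_{ik}(u)\nabla u^k)-(\partial_j g_{ik})(u)\nabla u^j\cdot\nabla u^k$ and the identity $e_k^a(u)\,g^{ki}(u)=(E^{-1})_{ai}(u)$ (immediate from $E\!\cdot\!E^{-2}=E^{-1}$ and symmetry of $E$). One gets three contributions: a term quadratic in $\nabla u$, the term $-\tfrac12(E^{-1})_{ai}(u)\,\Omega_{ik}\cdot\nabla u^k$, and the term $-\tfrac12(E^{-1})_{ai}(u)\,\nabla^\bot b_{ik}\cdot\nabla u^k$. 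For the quadratic term only its part symmetric in the two differentiation indices survives the pairing with $\nabla u^i\cdot\nabla u^j$; by $\partial_k g_{ij}=(\partial_k e_i^a)e_j^a+e_i^a(\partial_k e_j^a)$ and torsion-freeness of the Levi-Civita connection of $g$, this part is assembled solely from the skew combinations $\tfrac12(\partial_i e_k^a-\partial_k e_i^a)$ --- the very ``curl'' of \eqref{eq:nonash:curle} (here the would-be embedding $E$ maps $\R^n$ into $\R^n$, so there is no normal space and this curl \emph{is} the full obstruction). Substituting $\nabla u=A^{-1}w$ and $e_j^b(u)\nabla u^j=w^b$, the quadratic term becomes $\widetilde\Omega^{(1)}_{ac}\cdot w^c$ with, up to constants, $\widetilde\Omega^{(1)}_{ac}=-(E^{-1})_{ak}(u)(E^{-1})_{ci}(u)\,(\partial_i e_k^b-\partial_k e_i^b)(u)\,w^b$; swapping the dummy indices $k\leftrightarrow i$ and using antisymmetry of $\partial_i e_k^b-\partial_k e_i^b$ gives $\widetilde\Omega^{(1)}_{ac}=-\widetilde\Omega^{(1)}_{ca}$. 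Likewise the $\Omega$-term equals $\widetilde\Omega^{(2)}_{ac}\cdot w^c$ with $\widetilde\Omega^{(2)}_{ac}=-\tfrac12(E^{-1})_{ai}(u)(E^{-1})_{ck}(u)\,\Omega_{ik}$, skew because $\Omega$ is skew and its two indices are contracted against the same factor $E^{-1}(u)$. Setting $\widetilde\Omega:=\widetilde\Omega^{(1)}+\widetilde\Omega^{(2)}$ --- and either folding the last term into $\widetilde\Omega$ (the coefficient $-\tfrac12(E^{-1})_{ai}(E^{-1})_{ck}\nabla^\bot b_{ik}$ is again skew since $b$ is, and lies in $L^2$ since $b\in W^{1,2}(D)$) or carrying it as the Jacobian $\nabla^\bot b_{ik}\cdot\nabla u^k$ --- one arrives at
\[
\operatorname{div}(A_{ik}\nabla u^k)=\widetilde\Omega_{ik}\cdot A_{kl}\nabla u^l+\nabla^\bot b_{ik}\cdot\nabla u^k .
\]
Finally $\widetilde\Omega\in L^2(D,\R^2)$: $E^{-1}(u)$ and $(\partial e)(u)$ are bounded, $w\in L^2$, $\Omega\in L^2$, and $\nabla^\bot b\in L^2$.

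\textbf{Main obstacle.} The whole argument hinges on the algebraic collapse just described --- that the genuinely ``Christoffel-type'' quadratic term, once re-expressed through $A\nabla u=w$, becomes a tensor skew-symmetric in the two matrix indices. This is precisely H\'elein's moving-frame computation, and it is why the hypotheses $g,g^{-1}\in W^{1,\infty}$ are imposed: they keep $e_i$, $\partial e_i$, and hence $\widetilde\Omega$, in the required function spaces. The remaining points (chain-rule regularity and a.e.\ invertibility of $A$, the $L^2$-bound for $\widetilde\Omega$, and the cosmetic choice of whether to carry the $\nabla^\bot b$-term separately) are routine.
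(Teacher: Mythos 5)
The proposal is correct and follows essentially the same route as the paper: choose $e_i$ with $\langle e_i,e_j\rangle_{\R^n}=g_{ij}$, set $A$ to be (essentially) the frame $E$ evaluated along $u$, and observe that after pairing with $A\nabla u$ the ``Christoffel-type'' quadratic terms reorganize into the skew combination $\partial_i e_k^a-\partial_k e_i^a$, while the $\Omega$-term is conjugated by $E^{-1}(u)$ and remains skew. Your explicit choice $E=g^{1/2}$ (hence $E$ symmetric, $Eg^{-1}=E^{-1}$) is a convenient concrete instance of what the paper leaves as ``by easy algebraic transformations'' and cleans up the index conventions. One cosmetic difference: you reason pointwise from $\operatorname{div}w^a=(\partial_j e_i^a)(u)\nabla u^j\cdot\nabla u^i+e_i^a(u)\lap u^i$, whereas the paper stays in weak form, pairing with the adapted test function $\varphi^i=g^{ij}(u)\langle e_j(u),\tilde\varphi\rangle$; for a sketch both are fine. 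A small inaccuracy: the option you mention of ``carrying the $b$-term as the Jacobian $\nabla^\bot b_{ik}\cdot\nabla u^k$'' is not really available, since the transformation multiplies it by $(E^{-1})(u)$, which is $u$-dependent and so cannot be pulled inside a $\nabla^\bot$; your alternative of folding $-\tfrac12(E^{-1})_{ai}(u)(E^{-1})_{ck}(u)\nabla^\bot b_{ik}$ into $\widetilde\Omega$ (skew, $L^2$) is the one that actually delivers the stated conclusion (and is in fact closer to what the paper's statement requires, which the paper's own sketch does not spell out). Finally, the appeal to ``torsion-freeness of the Levi--Civita connection'' is superfluous; the relevant identity is simply the split $\partial_i e_k^a=\partial_k e_i^a+(\partial_i e_k^a-\partial_k e_i^a)$ combined with $e_l^a\nabla u^l=w^a$, which you use anyway.
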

\begin{proof}[Sketch of the proof]
By easy algebraic transformations using symmetry and positive definiteness of $g$ one can choose $e_i \in W^{1,\infty}(\R^n,\R^n)$ such that 
\begin{equation}\label{eq:nonash:eij}
\langle e_i(x), e_j(x) \rangle_n = g_{ij}(x), \quad x \in \R^n,\quad 1\leq i,j \leq n. 
\end{equation}
The $A_{ia}$ from the claim will be $e_i^a\circ u$. Let us abbreviate as follows
\begin{equation}
\label{eq:nonash:xia}
 \xi^a := A_{ak} \nabla u^k = e_k^a(u) \nabla u^k, 
\end{equation}
which is equivalent to
\begin{equation}
\label{eq:nonash:uxia}
 \nabla u^j = g^{jk}(u)\ e_k^a(u)\ \xi^a.
\end{equation}
Let $\varphi$ be any admissible testfunction. The first term on the lefthand side of \eqref{eq:nonash:elg}
\[
 I := 2 g_{ik}(u) \nabla u^k\cdot \nabla \varphi^i \overset{\eqref{eq:nonash:uxia}}{=} 2 \xi^a \cdot (e^a_i(u) \nabla \varphi^i).
\]
On the other hand, the second term of \eqref{eq:nonash:elg}
\[
 \begin{ma}
 II &:=& \partial_{i} g_{kl}(u)\ \nabla u^k\cdot \nabla u^l\ \varphi^i\\
&\overset{\eqref{eq:nonash:eij}}{=}& 2 (\partial_{i} e_k^a)(u)\ e_l^a(u)\ \nabla u^k\cdot \nabla u^l\ \varphi^i\\
&=& 2 (\partial_{k} e_i^a)(u)\ e_l^a(u)\ \nabla u^k\cdot \nabla u^l\ \varphi^i\\
&&\quad +2 (\partial_{i} e_k^a - \partial_{k} e_i^a)(u)\ e_l^a(u)\ \nabla u^k\cdot \nabla u^l\ \varphi^i\\
&=:& II_1 + II_2.
 \end{ma}
\]
One computes
\[
 II_1 \overset{\eqref{eq:nonash:xia}}{=} 2\nabla (e_i^a(u))\ \varphi^i \cdot \xi^a,
\]
and thus
\[
 I + II_1 = 2\xi^a  \cdot \nabla (e_i^a(u) \varphi^i).
\]
For arbitrary $\tilde{\varphi} \in C_0^\infty(D,\R^n)$ one sets
\begin{equation}\label{eq:nonash:testfct}
 \varphi^i := g^{ij}(u)\ \langle e_j(u), \tilde{\varphi}\rangle_n
\end{equation}
which is an admissible testfunction. One checks that
\[
 \langle \tilde{\varphi} - e_j(u)\ \varphi^j, e_s(u)\rangle_n \overset{\eqref{eq:nonash:eij}}{=} 0, \quad 1 \leq s \leq n.
\]
Pointwise in $\R^n$ the vectors $e_i \in \R^n$, $1 \leq i \leq n$, are linearly independent, which implies $\tilde{\varphi} = e_j(u)\ \varphi^j$ almost everywhere. Then
\[
 I+II_1 = 2\xi^a \cdot \nabla \tilde{\varphi}^a.
\]
Rewriting the quantity $II_2$ in terms of $\xi^a$ and $\tilde{\varphi}$ yields
\[
\begin{ma} 
II_2 &=&  2(\partial_{i} e_k^a - \partial_{k} e_i^a)(u)\ \xi^a \cdot g^{ks}(u)\ e_s^b(u)\ \xi^b\ g^{it}(u)\ e_t^c(u)\ \tilde{\varphi}^c\\
&=:&2\omega_{bc}\ \xi^b\ \tilde{\varphi}^c,
\end{ma}
\]
where $\omega_{bc} = (\partial_{i} e_k^a - \partial_{k} e_i^a)(u)\ \xi^a \cdot g^{ks}(u)\ e_s^b(u)\ g^{it}(u)\ e_t^c(u)$ is antisymmetric and in $L^2$.\\
For the right hand side of \eqref{eq:nonash:elg} one observes just by plugging in \eqref{eq:nonash:testfct} and \eqref{eq:nonash:uxia}
\[
 \Omega_{ik}\cdot \nabla u^k \varphi^i = \Omega_{ik}\ g^{kl}(u)\ e_l^a(u)\ \ g^{is}(u) e^c_s(u)\ \ \tilde{\varphi}^c\cdot \xi^a
\]
and $\widetilde{\Omega}_{ac} :=\Omega_{ik}\ g^{kl}(u)\ e_l^a(u)\ \ g^{is}(u) e^c_s(u)$ is antisymmetric and in $L^2$.
\end{proof}
\renewcommand{\thesection}{A}
\renewcommand{\thesubsection}{A.\arabic{subsection}}
\section{Appendix: Application of Dirichlet Growth Theorem}\label{sec:dgt}
In this section we will sketch how to apply the Dirichlet Growth Theorem (cf. \cite[Theorem 3.5.2]{Morrey}) in order to derive regularity for solutions of \eqref{eq:upde}, given the existence of $P$ as in the proof of Theorem \ref{th:energy}. A detailed proof can be found in \cite{StruweRiviere}. As a slight modification, we will remark on how to avoid Morrey-space estimates on the gradient of the gauge-transformation $P$. Those Morrey-space estimates can be obtained via the Uhlenbeck-Approach, but it is not obvious how to get them by a method as in Theorem \ref{th:energy}. We will show that the $L^2$-estimates of Theorem \ref{th:energy} are sufficient.\\
\\
We will use one non-elementary technique, namely the duality between Hardy-space and BMO. But in fact we need only a special case. For $p \in (1,\infty)$ set
\[
 \mathcal{J}_p(x,\rho;f) := \frac{1}{\rho^{m-p}}\ \intl_{B_\rho(x)} \abs{f}^p,
\]
\[
 \mathcal{M}_p(y,\varrho;f) := \sup_{B_{\rho}(x) \subset B_{\varrho}(y)} \mathcal{J}_p(x,\rho;f).
\]
\begin{lemma}[Hardy-BMO-Inequality]\label{la:hbmo}
For any $p > 1$, there is a uniform constant $C_{m,p}$ such that the following holds:\\
For any ball $B \equiv B_\varrho (y) \subset \R^m$, $2B = B_{2\varrho}(y)$ the ball with same center and twice the radius, $a \in W^{1,2}(2B)$, $\Gamma \in L^2(B,\R^m)$, $\operatorname{div} \Gamma = 0$ in $B$, $c \in W^{1,2}_0\cap L^\infty(B)$
\[
\abs{ \intl_{B} (\nabla a \cdot \Gamma)\ c} \leq C_{m,p}\ \Vert \Gamma \Vert_{L^2(B)}\ \Vert \nabla c \Vert_{L^2(B)}\ \left (\mathcal{M}_p(y,2\varrho,\nabla a) \right )^{\frac{1}{p}},
\]
whenever the right hand side is finite.
\end{lemma}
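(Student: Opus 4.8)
The plan is to integrate by parts so as to move the derivative off $a$ and onto $c$, turning the ``bad'' factor $a$ into one that enters only through its mean oscillation while the two surviving $L^2$-factors $\Gamma$ and $\nabla c$ combine into a divergence--curl quantity. Since $\operatorname{div}\Gamma=0$ in $B$ and $c\in W^{1,2}_0(B)$, one expects
\[
 \intl_B (\nabla a\cdot\Gamma)\,c \;=\; -\intl_B \big(a-(a)_B\big)\,\big(\Gamma\cdot\nabla c\big),
\]
because formally the difference of the two sides equals $\intl_B \Gamma\cdot\nabla\big((a-(a)_B)\,c\big)=0$, and $\intl_B\Gamma\cdot\nabla c=0$ as $c\in W^{1,2}_0(B)$. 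To make this rigorous one regularises: mollify $\Gamma$ and $a$ at scale $\varepsilon$ and approximate $c$ by $c_\delta\in C_c^\infty(B)$; for fixed $\delta$ and $\varepsilon$ small, $a_\varepsilon c_\delta\in W^{1,2}_0(B)$ is an admissible test function for $\operatorname{div}\Gamma_\varepsilon=0$ (which holds on a neighbourhood of $\operatorname{supp} c_\delta$), and one then lets $\varepsilon\to0$ and $\delta\to0$, using $\nabla a,\Gamma,\nabla c\in L^2$ and the uniform a~priori bound obtained below. (The bookkeeping here is carried out in \cite{StruweRiviere}.)

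Next I would show that $\Gamma\cdot\nabla c$, extended by $0$ outside $B$, belongs to the Hardy space $\mathcal{H}^1(\R^m)$ with $\|\Gamma\cdot\nabla c\|_{\mathcal{H}^1(\R^m)}\le C_m\|\Gamma\|_{L^2(B)}\|\nabla c\|_{L^2(B)}$. For this one uses that on the ball $\operatorname{div}\Gamma=0$ yields an antisymmetric potential $\psi\in W^{1,2}(B)$ with $\Gamma_i=\partial_j\psi_{ij}$ and $\|\nabla\psi\|_{L^2(B)}\le C_m\|\Gamma\|_{L^2(B)}$; extending $\psi$ to an antisymmetric compactly supported $\tilde\psi\in W^{1,2}(\R^m)$ and setting $\tilde\Gamma_i:=\partial_j\tilde\psi_{ij}$ gives a globally divergence-free $L^2$-field agreeing with $\Gamma$ on $B$ and with $\|\tilde\Gamma\|_{L^2(\R^m)}\le C_m\|\Gamma\|_{L^2(B)}$. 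Since the zero-extension $\tilde c$ of $c$ has curl-free gradient on $\R^m$, the div-curl lemma of \cite{CLMS} (in a form valid on balls, as in \cite{Chanillo91}, \cite{ChanilloLi92}) gives $\tilde\Gamma\cdot\nabla\tilde c\in\mathcal{H}^1(\R^m)$ with the stated bound; this function has vanishing integral, is supported in $\overline B$, and equals $\Gamma\cdot\nabla c$ there.

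Then I would record that the Morrey quantity controls mean oscillation on $2B$: for every $B_\rho(x)\subset B_{2\varrho}(y)$ Poincaré's inequality gives
\[
 \frac{1}{\rho^{m}}\intl_{B_\rho(x)}\abs{a-(a)_{B_\rho(x)}}^{p}\le C_{m,p}\,\frac{1}{\rho^{m-p}}\intl_{B_\rho(x)}\abs{\nabla a}^{p}\le C_{m,p}\,\mathcal{M}_p(y,2\varrho;\nabla a),
\]
so $a$ lies in the Campanato space $\mathcal{L}^{p,m}(B_{2\varrho})=\mathrm{BMO}(B_{2\varrho})$ with $[a]_{\mathrm{BMO}(B_{2\varrho})}\le C_{m,p}\big(\mathcal{M}_p(y,2\varrho;\nabla a)\big)^{1/p}$; in particular $a\in L^{s}(B_{2\varrho})$ for all $s<\infty$, which is exactly what legitimises the limiting integral in the first step. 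Feeding the three ingredients into the $\mathcal{H}^1$--$\mathrm{BMO}$ duality of Fefferman (applied in the localised form adapted to the ball, since the $\mathcal{H}^1$-function is supported in $\overline B$ while $a$ is controlled on $2B$),
\[
 \Big|\intl_B (\nabla a\cdot\Gamma)\,c\Big| =\big|\langle\,\Gamma\cdot\nabla c\,,\,a-(a)_B\,\rangle\big| \le C\,[a]_{\mathrm{BMO}}\,\|\Gamma\cdot\nabla c\|_{\mathcal{H}^1} \le C_{m,p}\,\|\Gamma\|_{L^2(B)}\,\|\nabla c\|_{L^2(B)}\,\big(\mathcal{M}_p(y,2\varrho;\nabla a)\big)^{1/p},
\]
which is the asserted inequality.

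I expect the main obstacle to lie in the Hardy-space step together with the localisation: the divergence-free structure is available only on $B$, so one must produce the vector potential (equivalently, a divergence-free extension) with norm control, and one must pair the resulting locally supported $\mathcal{H}^1$-function against a function $a$ that is only controlled on $2B$; this is the single point where genuine harmonic analysis enters. It can be handled either through $\mathcal{H}^1$--$\mathrm{BMO}$ duality as above, or, avoiding Hardy spaces altogether, by the Riesz-potential arguments of \cite{Chanillo91}, \cite{ChanilloLi92}. In the scale-invariant case $p=m$ one can even bypass all of this by writing $a-(a)_B$ and $\psi$ as Riesz potentials of $\nabla a$ and of $\Gamma$ and applying Hölder's inequality directly; for general $p>1$ this shortcut fails, which is why the compensation (Hardy-space) structure is genuinely needed.
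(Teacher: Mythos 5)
The paper does not actually prove Lemma~\ref{la:hbmo}: it only points to two routes in the literature (Hardy-space theory via Coifman--Lions--Meyer--Semmes, Fefferman--Stein, and Stein; or, as the paper recommends, the ``easier'' Riesz-potential route of Chanillo, Chanillo--Li, and Haj\l{}asz--Strzelecki--Zhong). Your proof is a correct fleshing-out of the first of these two routes, and the overall architecture is right: integrate by parts using $\operatorname{div}\Gamma=0$ and $c\in W^{1,2}_0(B)$ to land on $\int_B(a-(a)_B)(\Gamma\cdot\nabla c)$, show $\Gamma\cdot\nabla c$ lands in $\mathcal{H}^1$ with the product of $L^2$-norms, use Poincar\'e to convert the Morrey bound on $\nabla a$ into a BMO bound on $a$, and close with $\mathcal{H}^1$--BMO duality. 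You also correctly identify the ``easier'' alternative and the role of $p=m$ as the borderline where Riesz potentials plus H\"older suffice, which matches the paper's remark.

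Two spots deserve more than a gesture. First, the localisation in the duality pairing: $\Gamma\cdot\nabla c$, extended by zero, lives in $\mathcal{H}^1(\R^m)$ and is supported in $\overline{B}$, while $a$ is only controlled in BMO on $2B$; to invoke Fefferman's duality one needs either a scale-invariant BMO-extension (Jones) from $2B$ to $\R^m$, or a local version of the $\mathcal{H}^1$--BMO pairing in which the atomic decomposition of the compactly supported $\mathcal{H}^1$-function is arranged with atoms contained in $2B$. Either route works and is standard, but it is the genuinely nontrivial analytic step and should be named rather than folded into ``localised form adapted to the ball''. Second, your globalisation of $\Gamma$ via an antisymmetric potential $\psi$ with $\Gamma_i=\partial_j\psi_{ij}$ and $\|\nabla\psi\|_{L^2(B)}\le C_m\|\Gamma\|_{L^2(B)}$ is the right device (and the divergence-free extension $\tilde\Gamma=\partial_j\tilde\psi_{ij}$ is clean), but this potential estimate is itself a Bogovskii/Poincar\'e-lemma fact that you should either cite or derive by scaling on the unit ball; it is not completely free. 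Neither point is a fatal gap, but both are precisely where the ``harmonic analysis enters'', as you correctly flag.
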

For a proof one can use Hardy-space theory, (cf. \cite{CLMS}[Theorem~II.1], \cite[Chapter II.2]{FS72}, \cite[Chapter IV, \textsection 1.2]{Stein93}), but in this special case the proof is easier (cf. \cite{Chanillo91}, \cite{ChanilloLi92}, \cite{HSZTBA}).\\
\\
\begin{theorem}[{\cite[Theorem 1.1]{StruweRiviere}}]\label{th:srpartial}
There is $\varepsilon \equiv \varepsilon(m) \in (0,1)$ such that the following holds:\\
Let $D \subset \R^m$ be open and $u \in W^{1,2}(D,\R^n)$ be a solution of
\[
 \lap u^i = \Omega_{ik} \cdot \nabla u^k\quad \mbox{in $D$},\quad 1 \leq i \leq n
\]
such that
\begin{equation}\label{eq:sr:osmall}
 \sup_{B_r(x) \subset D} \frac{1}{r^{m-2}} \intl_{B_r(x)} \abs{\Omega}^2 \leq \varepsilon
\end{equation}
and
\begin{equation}\label{eq:sr:ubounded}
 \sup_{B_r(x) \subset D} \frac{1}{r^{m-2}} \intl_{B_r(x)} \abs{\nabla u}^2 < \infty.
\end{equation}
If $\Omega_{ij} = - \Omega_{ji} \in L^2(D,\R^m)$ then $u \in C^{0,\alpha}(D,\R^n)$ for some $\alpha \in (0,1)$.
\end{theorem}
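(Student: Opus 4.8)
The plan is to run the Dirichlet growth argument of \cite{StruweRiviere}, but constructing the gauge $P$ afresh on each ball via Theorem~\ref{th:energy}, so that only the $L^2$-bound on $\nabla P$ is ever used. Working locally, it suffices to show that $\nabla u$ lies in the Morrey space $L^{2,\,m-2+2\alpha}_{loc}(D)$ for some $\alpha\in(0,1)$; Morrey's Dirichlet growth theorem \cite[Theorem~3.5.2]{Morrey} then gives $u\in C^{0,\alpha}_{loc}(D,\R^n)$. Fix $B_{2\rho}(x_0)\subset D$ with $\rho$ small. By Theorem~\ref{th:energy} on $B_{2\rho}(x_0)$ there is $P\in W^{1,2}(B_{2\rho}(x_0),SO(n))$ with $\operatorname{div}\Omega^P=0$, where $\Omega^P:=P^T\nabla P-P^T\Omega P$, and, by \eqref{eq:sr:osmall},
\[
 \Vert\nabla P\Vert_{L^2(B_{2\rho}(x_0))}+\Vert\Omega^P\Vert_{L^2(B_{2\rho}(x_0))}\le 3\Vert\Omega\Vert_{L^2(B_{2\rho}(x_0))}\le C\sqrt{\varepsilon}\,\rho^{\frac{m-2}{2}}.
\]
Since $P(x)\in SO(n)$ a.e., the field $F^i:=P^T_{ik}\nabla u^k$ satisfies $\abs{F}=\abs{\nabla u}$ a.e. and, by \eqref{eq:trafopde}, solves $\operatorname{div}F^i=\Omega^P_{ij}\cdot F^j$ in $B_{2\rho}(x_0)$, with $\operatorname{curl}F^i=\nabla P^T_{ik}\wedge\nabla u^k\in L^1(B_{2\rho}(x_0))$.

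The core is a one-step decay for $\Phi(x,r):=r^{2-m}\intl_{B_r(x)}\abs{\nabla u}^2$. I would Hodge-decompose $F^i$ on $B_\rho(x_0)$ as $F^i=\nabla\beta^i+\nabla\psi^i+\delta^i$, where $\beta^i\in W^{1,2}_0(B_\rho(x_0))$ solves $\lap\beta^i=\operatorname{div}F^i=\Omega^P_{ij}\cdot F^j$, where $\psi^i$ is harmonic (so $\nabla\psi^i$ is the divergence- and curl-free part), and where $\delta^i$ is divergence-free carrying the curl $\operatorname{curl}\delta^i=\operatorname{curl}F^i=\nabla P^T_{ik}\wedge\nabla u^k$. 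The harmonic part obeys the elementary decay
\[
 (\theta\rho)^{2-m}\intl_{B_{\theta\rho}(x_0)}\abs{\nabla\psi}^2\le C\,\theta^2\,\rho^{2-m}\intl_{B_\rho(x_0)}\abs{F}^2=C\,\theta^2\,\Phi(x_0,\rho),\qquad \theta\in(0,1),
\]
so the whole argument reduces to showing that $\nabla\beta$ and $\delta$ are small in the Morrey scale on $B_\rho(x_0)$, quantitatively of size $C\sqrt{\varepsilon}$ times a Morrey norm of $\nabla u$.

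This last estimate is the only place where the single non-elementary tool, Lemma~\ref{la:hbmo}, is invoked. One tests the equation for $\beta$ against a normalised $\varphi\in C_0^\infty(B_\rho(x_0),\R^n)$, uses $\intl\delta^i\cdot\nabla\varphi^i=\intl\nabla\psi^i\cdot\nabla\varphi^i=0$ to reduce to $\intl\Omega^P_{ij}\cdot F^j\,\varphi^i$, expands $F^j=\nabla\beta^j+\nabla\psi^j+\delta^j$, and estimates each summand by Lemma~\ref{la:hbmo}: the $\nabla\beta^j$- and $\nabla\psi^j$-terms directly, in the form (gradient)$\,\cdot\,$(divergence-free field, namely $\Omega^P_{ij}$)$\,\times\,$(bounded function), and the $\delta^j$-term --- as well as $\Vert\delta\Vert_{L^2}$ itself --- through the $L^1$-curl $\nabla P^T\wedge\nabla u$ of $F$ (for $m=2$ this is exactly Wente's inequality). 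Running this on every sub-ball, absorbing the Morrey quantities of $\nabla\beta$ and $\delta$, and inserting $\Vert\Omega^P\Vert_{L^2(B_{2\rho}(x_0))}\le C\sqrt{\varepsilon}\,\rho^{(m-2)/2}$, one reaches
\[
 \sup_{B_r(x)\subset B_\rho(x_0)}r^{2-m}\intl_{B_r(x)}\big(\abs{\nabla\beta}^2+\abs{\delta}^2\big)\le C\,\varepsilon\,\Psi(\rho),\qquad \Psi(\rho):=\sup_{B_{2s}(x)\subset D,\;s\le\rho}\Phi(x,s),
\]
and $\Psi(\rho)<\infty$ by \eqref{eq:sr:ubounded}. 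Together with the decay of $\nabla\psi$ this gives $\Phi(x_0,\theta\rho)\le C(\theta^2+\varepsilon)\,\Psi(\rho)$; taking the supremum over admissible $B_{2\rho}(x_0)$ and running the standard Morrey--Campanato iteration --- fixing first $\theta$, then $\varepsilon\equiv\varepsilon(m)$ small enough that $C(\theta^2+\varepsilon)\le\theta^{2\alpha}$ for a suitable $\alpha\in(0,1)$ --- yields $\intl_{B_r(x)}\abs{\nabla u}^2\le C\,r^{m-2+2\alpha}$ locally, hence $u\in C^{0,\alpha}_{loc}(D,\R^n)$.

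The principal obstacle is this absorption step: the genuinely $L^1$ quantities that appear --- the product $\Omega^P\cdot F$ driving $\lap\beta$, and the curl $\nabla P^T\wedge\nabla u$ of $F$ --- must be bounded by $\varepsilon$ times a Morrey norm of $\nabla u$, and Lemma~\ref{la:hbmo}, a self-contained substitute for Hardy--BMO duality, is exactly what delivers this while everything else stays elementary. The point to record, as announced in the remark following Lemma~\ref{la:chonediv}, is that throughout $\nabla P$ enters only as an $L^2(B_{2\rho}(x_0))$-factor paired against a divergence-free field --- where the scaling $\Vert\nabla P\Vert_{L^2(B_{2\rho}(x_0))}\le C\sqrt{\varepsilon}\,\rho^{(m-2)/2}$ of Theorem~\ref{th:energy} is precisely what the estimate requires --- or inside the $L^1$-product $\nabla P^T\wedge\nabla u$; since $P$ is rebuilt on each ball $B_{2\rho}(x_0)$ with this correctly scaled bound, no Morrey estimate on $\nabla P$ beyond $L^2$ is needed, which is the one simplification over \cite{StruweRiviere}. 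The remaining ingredients --- the curl identity for $F$, scale invariance of the Hodge decomposition, and the comparison of the Morrey norms of the Hodge pieces of $F$ with that of $\nabla u$ --- are elementary but somewhat tedious.
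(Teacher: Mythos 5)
Your overall plan — rebuild $P$ on each ball via Theorem~\ref{th:energy}, Hodge-decompose $F^i=P^T_{ik}\nabla u^k$, decay the harmonic piece, control the two non-harmonic pieces via Lemma~\ref{la:hbmo}, and iterate in a Morrey scale — is exactly the paper's, and your observation that only the $L^2$-bound $\|\nabla P\|_{L^2(B_{2\rho})}\le C\sqrt\varepsilon\,\rho^{(m-2)/2}$ is ever used matches the paper's key simplification over \cite{StruweRiviere}.

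However, there is a genuine gap in the choice of Morrey scale. You iterate the quantity $\Phi(x,r)=r^{2-m}\int_{B_r}|\nabla u|^2$ and claim
\[
\sup_{B_r(x)\subset B_\rho(x_0)} r^{2-m}\int_{B_r(x)}\bigl(|\nabla\beta|^2+|\delta|^2\bigr)\le C\,\varepsilon\,\Psi(\rho),
\]
i.e.\ an $L^2$-Morrey bound on the divergence- and curl-carrying Hodge pieces. This is not obtainable from Lemma~\ref{la:hbmo}. The equation $\lap\beta=\Omega^P\cdot F$ has only an $L^1$ right-hand side, and Lemma~\ref{la:hbmo} estimates a pairing against $c\in W^{1,2}_0\cap L^\infty$; used by duality it therefore only produces bounds on $\|\nabla\beta\|_{L^p}$ for $p<\tfrac{m}{m-1}$, since the test function $\varphi$ must lie in $W^{1,q}_0$ with $q>m$ (so that it, and then $c=P^T\varphi$, is bounded). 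You cannot take $c=\beta$ itself to obtain an $L^2$ energy estimate, because a solution of a Poisson problem with $L^1$ data need not be bounded. The same objection applies to $\delta$. For $m=2$ this particular obstruction disappears ($m/(m-1)=2$, and Wente gives $L^2$ control), but the theorem is stated for general $m$, and for $m>2$ the Hodge pieces are only controlled in $L^{m/(m-1)}\subsetneq L^2$.

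This is precisely why the paper fixes $p\in(1,\tfrac{m}{m-1})$ from the start, replaces $\Phi$ by the sub-natural Morrey quantity $\mathcal{M}_p$, bounds $\|\nabla f\|_{L^p(B_R)}$ and $\|\nabla g\|_{L^p(B_R)}$ by pairing against $W^{1,q}$-normalised test functions ($q>m$), and runs the Campanato iteration entirely at the exponent $p<2$, invoking Morrey's Dirichlet Growth theorem at that exponent. Descending below the natural exponent $2$ is not a cosmetic choice but the mechanism that makes the argument close; your $L^2$-based iteration, as written, does not.
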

\begin{proof}[Sketch of the proof]
Most parts of the following are a copy of the proof in \cite[Theorem 1.1]{StruweRiviere}.\\
Let $z \in D$, $0 < r < R < \frac{1}{2}\operatorname{dist}(z,\partial D)$. Apply Theorem \ref{th:energy} on $B_R(z)$: There exists $P \in W^{1,2}(B_R(z),SO(n))$ such that
\begin{equation}\label{eq:sr:divope0}
 \operatorname{div} (\Omega^P) \equiv \operatorname{div} (P^T \nabla P - P^T \Omega P) = 0 \quad \mbox{weakly in $B_R(z)$},
\end{equation}
with the estimate
\begin{equation}\label{eq:sr:npopest}
 \Vert \nabla P \Vert_{L^2(B_R(z))} + \Vert \Omega^P \Vert_{L^2(B_R(z))} \leq 3 \Vert \Omega \Vert_{L^2(B_R(z))}.
\end{equation}
We have weakly
\begin{equation}\label{eq:sr:divpnu}
 \operatorname{div} (P^T \nabla u) = \Omega^P\cdot  P^T \nabla u \quad \mbox{in $B_R(z)$}.
\end{equation}
Use Hodge decomposition to find $f \in W^{1,2}_0(B_R(z), \R^n)$, $g \in W^{1,2}_0(B_R(z), \wedge^2 \R^n)$, $h \in C^\infty(B_R(z),\R^n \otimes \R^m)$ such that
\begin{equation}\label{eq:sr:hdec}
 P^T \nabla u = \nabla f + \operatorname{Curl} g + h \quad \mbox{a.e. in $B_R(z)$,}
\end{equation}
\begin{equation}\label{eq:sr:lapf}
 \begin{cases}
  \lap f = \operatorname{div} (P^T \nabla u) \overset{\eqref{eq:sr:divpnu}}{=}\Omega^P\cdot  P^T \nabla u \quad &\mbox{in $B_R(z)$},\\
  f = 0 \quad &\mbox{on $\partial B_R(z)$},
 \end{cases}
\end{equation}
\[
 \begin{cases}
  \lap g = \operatorname{curl} (P^T \nabla u) \quad &\mbox{in $B_R(z)$},\\
  g = 0 \quad &\mbox{on $\partial B_R(z)$},
 \end{cases}
\]
\[
 \begin{cases}
  \operatorname{div} h = 0   \quad &\mbox{in $B_R(z)$},\\
\operatorname{curl} h = 0   \quad &\mbox{in $B_R(z)$}.\\
 \end{cases}
\]
For more on Hodge-decompositions we refer to \cite[Corollary 10.5.1]{IwaniecMartin}. Fix $1 < p < \frac{m}{m-1}$. One estimates
\[
\begin{ma}
 \intl_{B_r(z)} \abs{\nabla u}^p &=& \intl_{B_r(z)} \abs{P^T \nabla u}^p\\
&\overset{\eqref{eq:sr:hdec}}{\leq}& C_p \left ( 
	\intl_{B_r(z)} \abs{h}^p + \intl_{B_R(z)} \abs{\nabla f}^p  + \intl_{B_R(z)} \abs{\nabla g}^p
\right ).
\end{ma}
\]
By harmonicity we have (cf. \cite[Theorem 2.1, p.78]{GiaquintaMultipleIntegrals})
\[
 \intl_{B_r(z)} \abs{h}^p \leq C_p \left (\frac{r}{R}\right )^m \intl_{B_R(z)} \abs{h}^p.
\]
Consequently, again by \eqref{eq:sr:hdec},
\begin{equation} \label{eq:sr:estnu}
 \begin{ma}
 \intl_{B_r(z)} \abs{\nabla u}^p &\leq& C_p \left ( 
	\left (\frac{r}{R}\right )^m \intl_{B_R(z)} \abs{\nabla u}^p + \intl_{B_R(z)} \abs{\nabla f}^p  + \abs{\nabla g}^p
\right ).
\end{ma}
\end{equation}
In order to estimate $\int_{B_R(z)} \abs{\nabla f}^p$ note that since $f = 0$ on $\partial B_R(z)$, by duality
\begin{equation}\label{eq:sr:rrt}
 \Vert \nabla f \Vert_{L^p(B_R(z))} \leq C_p \sup_{
\ontop{\varphi \in C_0^\infty(B_R(z))}{ \Vert \varphi \Vert_{W^{1,q}} \leq 1}
} \intl_{B_R(z)} \nabla f\cdot \nabla \varphi. 
\end{equation}%
Here, $q = \frac{p}{p-1}$ denotes the H\"older-conjugate exponent of $p$. If $\Vert \varphi \Vert_{W^{1,q}(B_R(z))} \leq 1$ one calculates
\begin{equation}\label{eq:sr:vpest}
 \Vert \varphi \Vert_{L^\infty(B_R(z))} \leq C_p\ R^{1+\frac{m}{p} -m},\quad  \Vert \nabla \varphi \Vert_{L^2(B_R(z))} \leq C_p\  R^{\frac{m}{p}-\frac{m}{2}}.
\end{equation}%
Note that the $L^\infty$-bound holds only as $q > m$ by choice of $p$. In particular, the constant $C_p$ blows up as $p$ approaches $\frac{m}{m-1}$ from below.\\
Recall our notation
\[
 \mathcal{J}_p(x,\rho) := \frac{1}{\rho^{m-p}}\ \intl_{B_\rho(x)} \abs{\nabla u}^p,
\]
\[
 \mathcal{M}_p(y,\varrho) := \sup_{B_{\rho}(x) \subset B_{\varrho}(y)} \mathcal{J}_p(x,\rho).
\]
By \eqref{eq:sr:lapf}, 
\[
\begin{ma} 
\intl_{B_R(z)} \nabla f \cdot \nabla \varphi &=& \intl_{B_R(z)} \Omega^P\cdot P^T \nabla u\ \varphi.
\end{ma}
\]
As of \eqref{eq:sr:divope0} Lemma \ref{la:hbmo} can be applied to this quantity by choosing $c = P^T_{kl} \varphi$, $a = u^l$, $\Gamma = (\Omega^P)_{ik}$ for any $1 \leq i,k,l \leq n$. Then \eqref{eq:sr:rrt} is further estimated by
\[
\begin{ma}
&& \Vert \nabla f \Vert_{L^p(B_R(z))}\\ 
&\leq& C_p\ \Vert \Omega^P \Vert_{L^2(B_R(z))}\ (\Vert \nabla P \Vert_{L^2(B_R(z))} \Vert \varphi \Vert_{L^\infty} + \Vert \nabla \varphi \Vert_{L^2})\ \left (\mathcal{M}_p (z,2R) \right )^{\frac{1}{p}}\\ 
&\overset{\eqref{eq:sr:npopest}}{\leq}& C_p\ \Vert \Omega \Vert_{L^2(B_R(z))}\ (\Vert \Omega \Vert_{L^2(B_R(z))}\ \Vert \varphi \Vert_{L^\infty}   + \Vert \nabla \varphi \Vert_{L^2})\ \left (\mathcal{M}_p (z,2R) \right )^{\frac{1}{p}}\\
&\overset{\ontop{\eqref{eq:sr:osmall} }{ \eqref{eq:sr:vpest} }}{\leq}& C_p\ \varepsilon\ R^{\frac{m}{p} -1}\ \left (\mathcal{M}_p (z,2R) \right )^{\frac{1}{p}}.
\end{ma}
\]
Note again that the constant $C_p$ blows up as $p$ approaches $\frac{m}{m-1}$ from below. The last step is the only qualitative albeit tiny difference to the proof in \cite{StruweRiviere}: Instead of using an a-priori estimate on $\sup_{r} \frac{1}{r^{m-2}} \intl_{B_r} \abs{\nabla P}^2$ and $\sup_{r} \frac{1}{r^{m-2}} \intl_{B_r} \abs{\Omega^P}^2$, we use the domain-independent estimate \eqref{eq:sr:npopest} of the $L^2$-Norm of $\nabla P$ and $\Omega^P$, respectively.
By a similar argument
\[
 \Vert \nabla g \Vert_{L^p(B_R(z))} \leq C_p\ \varepsilon\ R^{\frac{m}{p} -1}\ \left (\mathcal{M}_p (z,2R) \right )^{\frac{1}{p}}.
\]
Plugging these estimates into \eqref{eq:sr:estnu} we arrive at
\[
 \intl_{B_r(z)} \abs{\nabla u}^p \leq C_p\ \left (\frac{r}{R}\right )^{m} \intl_{B_R(z)} \abs{\nabla u}^p + C_p\ \varepsilon\ R^{m-p}\ \mathcal{M}_p (z,2R).
\]
The right hand side of this estimate is finite by \eqref{eq:sr:ubounded}. We divide by $r^{m-p}$ to get
\[
\frac{1}{r^{m-p}} \intl_{B_r(z)} \abs{\nabla u}^2
\]
\[
\leq C_p\ \left (\frac{r}{R}\right )^{p} \frac{1}{R^{m-p}}\intl_{B_R(z)} \abs{\nabla u}^p + C_p\ \varepsilon\ \left (\frac{R}{r}\right )^{m-p}\ \mathcal{M}_p (z,2R).
\]
Hence,
\[
 \mathcal{J}_p(z,r) \leq C_p \left ( \left (\frac{r}{R}\right )^{p} + \varepsilon\ \left (\frac{R}{r}\right )^{m-p} \right )\ \mathcal{M}_p(z,2R).
\]
Choose $\gamma \in (0,\frac{1}{2})$ such that $C_p \gamma^p \leq \frac{1}{4}$ and set $\varepsilon := \gamma^m$. Then for $r := \gamma R$ we have shown
\[
 \mathcal{J}_p(z,\gamma R) \leq \frac{1}{2}\ \mathcal{M}_p(z,2R).
\]
This is valid for any $R > 0$, $z \in D$ such that $B_{2R}(z) \subset D$. For arbitrary $\rho \in (0,1)$, $y \in D$, $B_{2\rho}(y) \subset D$ this implies
\[
 \mathcal{J}_p(z,\gamma R) \leq \frac{1}{2}\ \mathcal{M}_p(y,\rho) \quad \mbox{whenever $B_{2R}(z) \subset B_\rho(y)$,}
\]
that is
\[
 \mathcal{M}_p(y,\frac{\gamma}{2} \rho) \leq \frac{1}{2} \mathcal{M}_p(y,\rho). 
\]
This gives H\"older-continuity as claimed.\\
\end{proof}

\begin{remark}
With the presented techniques one can prove slight generalizations of this. For example, in order to prove regularity for systems of the type
\[
 \partial_\alpha (g_{\alpha \beta} \partial_\beta u^i) = g_{\alpha \beta}\ \Omega_{ik}^\beta\ \nabla u^k,
\]
one would minimize
\[
 E(P) = \intl_D (P^T_{ik} \partial_\alpha P_{kj} - P_{ik}^T \Omega_{kl}^\alpha P_{lj})\ g_{\alpha \beta}\ (P^T_{ik} \partial_\beta P_{kj} - P_{ik}^T \Omega_{kl}^\beta P_{lj}).
\]
\end{remark}

\begin{remark}\label{rm:dg:simsys}
Slightly modifying this approach, one also can check the following: Let $\xi^i := A_{ik} \nabla u^k$, $A \in W^{1,2}\cap L^\infty(D,\R^n)$, and $u \in W^{1,2}(D,\R^m)$ satisfy \eqref{eq:sr:ubounded}. Assume that $\xi$ is a solution of a system like
\[
 \operatorname{div}(\xi^i) = \Omega_{ik} \cdot \xi^{k}\quad \mbox{in $D$},\quad 1\leq i \leq n.
\]
This implies better regularity of $u$, if \eqref{eq:sr:osmall} holds for $\Omega$ and $A$ and under the additional condition that there is a uniform constant $\Lambda > 0$ such that
\[
 \frac{1}{\Lambda} \abs{\xi} \leq \abs{\nabla u} \leq \Lambda \abs{\xi} \quad \mbox{a.e. in $D$}.
\]
The last condition is used to switch in growth estimates like \eqref{eq:sr:estnu} between $\abs{\xi}$ and $\abs{\nabla u}$.
\end{remark}

\bibliographystyle{alpha}%
\bibliography{bib}%
\vspace{2em}
\begin{tabbing}
\quad\=Armin Schikorra\\
\>RWTH Aachen University\\
\>Institut f\"ur Mathematik\\
\>Templergraben 55\\
\>52062 Aachen\\
\>Germany\\
\\
\>email: schikorra@instmath.rwth-aachen.de
\end{tabbing}
\end{document}